\documentclass[11pt]{amsart}

\usepackage{amsmath}
\usepackage{amsfonts}
\usepackage{amssymb}
\usepackage{amsthm}

\usepackage{graphicx}

\usepackage[T1]{fontenc}

\usepackage{enumerate}

%
%
%
%
%
%

\usepackage{color}

\def\CC{\mathbb{C}} 
\def\DD{\mathbb{D}} 
\def\NN{\mathbb{N}} 
\def\ZZ{\mathbb{Z}} 
\def\TT{\mathbb{T}} 
\def\CCS{\widehat{\CC}} 
\def\PD{\mathbb{P}} 
\def\CPD{\overline{\mathbb{P}}} 
\def\CDD{\overline{\DD}} 
\def\OO{\mathcal{O}} 
\def\CL{\mathcal{C}} 
\def\INT{\textnormal{int}\,} 



\def\CCO{\textnormal{cc}_0\,} 
\def\BNU{\overline{\nu}} %
\def\NONES#1{\textnormal{\textbf{1}}_{#1}}



\def\HULL#1#2{\widehat{#1}_{#2}} 
\def\HULLP#1{\HULL{#1}{}} 

\newtheoremstyle{remarkstyle}{}{}{}{}{\bf}{.}{ }{}

\newtheorem{THE}{Theorem}[section]

\newtheorem{LEM}[THE]{Lemma}

\newtheorem{OBS}[THE]{Observation}

\theoremstyle{remarkstyle}
\newtheorem{RM}[THE]{Remark}

\newtheorem*{DEF}{Definition}
\newtheorem*{AS}{Assumption}

\begin{document}


\title[The $*$-product of domains in $\CC^2$]{The $*$-product of domains in several complex variables}

\author{Sylwester Zaj\k{a}c}
\address{Institute of Mathematics, Faculty of Mathematics and Computer Science, Jagiellonian University, \L ojasiewicza 6, 30-348 Krak\'ow, Poland}
\email{sylwester.a.zajac@gmail.com}

\thanks{The research was supported by NCN grant SONATA BIS no. 2017/26/E/ST1/00723 of the National Science Centre, Poland.}

\keywords{Hadamard product, analytic continuation, spaces of holomorphic functions}
\subjclass[2010]{Primary: 32A05, 32D15}

\begin{abstract}
In this article we continue the research, carried out in \cite{zajac}, on computing the $*$-product of domains in $\CC^N$.
Assuming that $0\in G\subset\CC^N$ is an arbitrary Runge domain and $0\in D\subset\CC^N$ is a bounded, smooth and linearly convex domain (or a non-decreasing union of such ones), we establish a geometric relation between $D*G$ and another domain in $\CC^N$ which is 'extremal' (in an appropriate sense) with respect to a special coefficient multiplier dependent only on the dimension $N$.
Next, for $N=2$, we derive a characterization of the latter domain expressed in terms of planar geometry.
These two results, when combined together, give a formula which allows to calculate $D*G$ for two-dimensional domains $D$ and $G$ satisfying the outlined assumptions.
\end{abstract}

\maketitle

\section{Introduction}

Let $\OO_0$ be the set of all germs of holomorphic functions at the origin of $\CC^N$ and let $\OO_{0,D}$, for a domain $0\in D\subset\CC^N$, be the subset of $\OO_0$ consisting of all germs of elements of $\OO(D)$.
The latter symbol denotes, as usually, the Fr\'{e}chet space of all holomorphic functions on $D$ equipped with the compact-open topology.
The Hadamard product, called also the $*$-product, can be regarded as a bilinear mapping from $\OO_0\times\OO_0$ to $\OO_0$ given by the formula
$$\left(\sum_{\alpha\in\NN^N}f_\alpha z^\alpha\right)*\left(\sum_{\alpha\in\NN^N}g_\alpha z^\alpha\right):=\sum_{\alpha\in\NN^N}f_\alpha g_\alpha z^\alpha.$$
It was extensively studied in various aspects: as a bilinear form, as a linear operator with one factor fixed (see the survey \cite{render97} and, for instance, the papers \cite{bergoncalmorprabas}, \cite{bruckmuller92}, \cite{bruckmuller95}, \cite{bruckrender00}, \cite{grosseerdmann}, \cite{muller92}, \cite{muller93}, \cite{mullernaikponnusany06}, \cite{mullerpohlen10}, \cite{mullerpohlen12}, \cite{render96}, \cite{rendersauer96a}, \cite{rendersauer96b}, \cite{zajac}), and also as a map acting on spaces of real analytic functions (see \cite{domanskilangenbruch12a}, \cite{domanskilangenbruch12b}, \cite{domanskilangenbruch13}, \cite{domanskilangenbruch_pre1}, \cite{domanskilangenbruch_pre2}, \cite{domanskilangenbruchvogt}).
The problem of similar nature as here, that is of extending the $*$-products to as large domain as possible, was investigated, for instance, in \cite{aizenbergleinartas83}, \cite{elin94} and \cite{leinartas89} for weighted Hadamard products with certain weights, where results were obtained for starlike domains and for so-called $p$-convex domains (we refer the reader to \cite{elin94}).

In \cite{zajac} it was shown (see Proposition 4.1 there) that if at least one of domains $0\in D, G\subset\CC^N$ is Runge domain, then there exists the largest domain $0\in\Omega\subset\CC^N$ having the property that the image of $\OO_{0,D}\times\OO_{0,G}$ under the $*$-product lies in $\OO_{0,\Omega}$ (here, as well as in \cite{zajac}, we follow \cite[Definition 2.7.1]{hormander_introduction} and consider Runge domains to be pseudoconvex).
This largest $\Omega$, denoted as $D*G$, was the subject of research carried out in \cite{zajac}, which concluded in finding a description of $D*G$ for domains of a special class.
The methodology employed in \cite{zajac}, as well as the results achieved there, were of symmetrical nature: both $D$ and $G$ were assumed to belong to the same family of domains and the fundamental integral formula for $f*g$ relied on geometric properties of $D$ to the same extent as on those of $G$.

The approach presented in this paper is substantially different.
Starting from a non-symmetric integral expressing $f*g$, we investigate $D*G$ with only $D$ being of the same particular class as in \cite{zajac} and $G$ being an arbitrary Runge domain.
These considerations conclude in Theorem \ref{th_D_s_G_when_D_from_DN}, which establishes a relation, expressed in geometric terms, between $D*G$ and $h_{\NONES{N}}*G$.
Here $h_{\NONES{N}}(z)=(1-z_1-\ldots-z_N)^{-N}$ and the set $h_{\NONES{N}}*G$ is the largest domain containing the origin on which every product $h_{\NONES{N}}*g$, for $g\in\OO_{0,G}$, can be analytically continued.
When $G$ is Runge domain, existence of $h_{\NONES{N}}*G$ is guaranteed by the aforementioned \cite[Proposition 4.1]{zajac}.
To calculate $D*G$ we must, however, face the problem of computing $h_{\NONES{N}}*G$.
We deal with this topic in Section \ref{sect_h11_s_D_for_Runge}, where, in Theorem \ref{th_description_of_h11_s_D}, we derive a nice geometric formula for $h_{\NONES{N}}*G$, but, unfortunately, only for $N=2$.
It is worthy to emphasize that although obtaining a candidate for this set was quite straightforward and relied mainly of certain integral formula, main difficulties were met in demonstrating that this candidate is the largest one on which all $h_{\NONES{2}}*g$'s extend.
Combining this result with Theorem \ref{th_D_s_G_when_D_from_DN} announced above leads to a complete description of $D*G$ for two-dimensional domains satisfying the listed assumptions.
Nevertheless, the question for higher dimensions remains open.

\section{Preliminaries}\label{sect_preliminaries}

We begin by introducing basic concepts and notation setting grounds for this study.
We use the standard symbols $\DD$, $\TT$, $\CC_*$ and $\CCS$ to denote, respectively, the unit disc in $\CC$, its boundary, the punctured plane $\CC\setminus\lbrace 0\rbrace$ and the Riemann sphere $\CC\cup\lbrace\infty\rbrace$.
We assume that the set $\NN$ of all natural numbers contains $0$.
By $\PD_N(z,r)$ and $\CPD_N(z,r)$ we mean the open and closed polydiscs centered at $z$ and having the radius $r$.

To shorten notation, we will use the word 'loop' to denote a continuous map defined on $\TT$ and the word 'smooth' to declare being of the $\CL^\infty$ class.

For points $z=(z_1,\ldots,z_N),w=(w_1,\ldots,w_N)\in\CC^N$, by $z\bullet w$ we denote the product $z_1 w_1+\ldots+z_N w_N$ and by $z\cdot w$ or $zw$ we denote their coordinate-wise product, that is, the point $(z_1 w_1,\ldots, z_N w_N)$.
The identity element of the latter multiplication, $(1,1,\ldots,1)$ ($N$ times), is called $\NONES{N}$.
Given two sets $A,B\subset\CC^N$, by $AB$ or $A\cdot B$ we mean their algebraic product, i.e. the set $\lbrace ab:a\in A,b\in B\rbrace$.
We use the classical notation of exponentiation, where for an $\alpha=(\alpha_1,\ldots,\alpha_N)\in\ZZ^N$ the symbol $z^\alpha$ denotes $z_1^{\alpha_1}\ldots z_N^{\alpha_N}$, holding the convention that $z_j^0=1$.
Moreover, $\alpha!$ and $|\alpha|$ will, as usual, denote the product $\alpha_1!\ldots\alpha_N!$ and the sum $\alpha_1+\ldots+\alpha_N$.

For compact sets $K\subset\CC^N$ and $L\subset\Omega$ we use the standard notation $\HULLP{K}$ and $\HULL{L}{\Omega}$ for the polynomial hull of $K$ and the holomorphic hull of $L$ with respect to a domain $\Omega$.
The supremum of modulus of a complex-valued function over a set $A\subset\CC^N$ is denoted by $\|f\|_A$.
Finally, if $0\in A$, then by $\CCO A$ we understand the connected component of $A$ containing $0$.

\subsection{Integral formula for the $*$ product}

Take two power series $$f(z)=\sum_{\alpha\in\NN^N}f_\alpha z^\alpha,\quad g(z)=\sum_{\alpha\in\NN^N}g_\alpha z^\alpha$$ convergent in neighbourhoods of polydiscs $\CPD_N(0,r)$ and $\CPD_N(0,\rho)$, respectively.
A straightforward calculation allows us to derive the equality
\begin{equation}\label{eq_f_s_g_coordinate_wise}
(f*g)(z)=\left(\frac{1}{2\pi i}\right)^N\int_{\rho^{-1}\TT^N} f(z\zeta) \, g\left(\zeta_1^{-1},\ldots,\zeta_N^{-1}\right)\frac{d\zeta_1}{\zeta_1}\ldots\frac{d\zeta_N}{\zeta_N}
\end{equation}
for $z=(z_1,\ldots,z_N)\in\CPD_N(0,r\rho)$.
Its one-dimensional version was extensively used in research of Hadamard product in one complex variable.
Although in this paper we mostly rely on different tools, the above fact will come in useful at some point in Section \ref{sect_h11_s_D_for_Runge}.

\subsection{Integral formula for the $*$ product with special weights}

Take a bounded smooth domain $0\in D\subset\CC^N$, a neighbourhood $V$ of $\partial D$ and a smooth map $\varphi:V\to\CC^N$ such that $\zeta\bullet\varphi(\zeta)=1$ for all $\zeta\in V$.
If a function $f$ is holomorphic in a neighbourhood of $\overline{D}$ and $\sum_{\alpha\in\NN^N} f_\alpha z^\alpha$ is its Taylor series expansion at the origin, then from the considerations made in \cite[Section 2.1]{zajac} it follows that
$$f_\alpha=c_N\frac{(N+|\alpha|-1)!}{\alpha!}\int_{\partial D} f(\zeta)\varphi(\zeta)^\alpha\omega_\varphi(\zeta),$$
where $c_N$ is a constant dependent only on $N$ and $\omega_\varphi$ is certain smooth $(N,N-1)$ form on $V$ dependent only on $N$ and $\varphi$.
Now, if $g\in\OO_0$ has the Taylor series expansion $\sum_{\alpha\in\NN^N}g_\alpha z^\alpha$, then for $z$ close to $0$ one has that
\begin{equation}\label{eq_sum_N_factorial_f_alpha_g_alpha_z_alpha_integral_formula}
\sum_{\alpha\in\NN^N}\frac{\alpha!(N-1)!}{(|\alpha|+N-1)!} f_\alpha g_\alpha z^\alpha = c_N(N-1)!\int_{\partial D}f(\zeta)g(\varphi(\zeta)z)\omega_\varphi(\zeta).
\end{equation}
We will make use of this equality in Section \ref{sect_D_s_G_for_DN_and_Runge}.

\section{Description of $D*G$ for $D$ being of special class and $G$ being an arbitrary Runge domain}\label{sect_D_s_G_for_DN_and_Runge}

Our goal in this part of the paper is to demonstrate Theorem \ref{th_D_s_G_when_D_from_DN}.
For the Reader's convenience, we begin by recalling definition and elementary facts regarding the $*$-product of domains.

\begin{AS}
Throughout this section we assume that $N\geq 2$ and $D$, $G$ are domains in $\CC^N$ containing the origin. 
\end{AS}

\begin{DEF}
Assume that at least one of $D$, $G$ is Runge domain.
Proposition 4.1 from \cite{zajac} establishes existence of the largest domain $0\in\Omega\subset\CC^N$ having the property that the image of $\OO_{0,D}\times\OO_{0,G}$ by $*$ lies in $\OO_{0,\Omega}$.
We define $D*G$ as this largest $\Omega$.
The referenced fact guarantees that $D*G$ itself is Runge domain.

For every pair of functions $f\in\OO(D)$ and $g\in\OO(G)$ there exists the only element of $\OO(D*G)$ equal to $f*g$ near the origin.
Following \cite{zajac}, we denote it by $f*_{D,G}g$.
We then obtain the bilinear mapping $$*_{D,G}:\OO(D)\times\OO(G)\ni (f,g)\mapsto f*_{D,G}g\in\OO(D*G).$$
From the closed graph theorem it follows that $*_{D,G}$ is separately continuous.
This, together with \cite[page 88, Corollary 1]{schaefer}, guarantees that it is jointly continuous.
\end{DEF}

In a similar fashion we introduce the $*$-product of a germ from $\OO_0$ and a domain containing the origin.

\begin{DEF}
Assume that $G$ is Runge domain.
If $\tau\in\OO_0$, then, in virtue of \cite[Proposition 4.1]{zajac}, there exists the largest domain $0\in\Omega\subset\CC^N$ such that the image of $\OO_{0,G}$ by the map $g\mapsto \tau *g$ lies in $\OO_{0,\Omega}$.
We denote this largest $\Omega$ by $\tau *G$.
As previously, $\tau *G$ is Runge domain.

For each $g\in\OO(G)$ there exists the only function from $\OO(\tau *G)$ equal to $\tau *g$ near the origin.
Denote this function by $\tau *_G g$.
The closed graph theorem yields that the linear operator $$\OO(G)\ni g\mapsto\tau *_G g\in\OO(\tau *G)$$ is continuous.
\end{DEF}

\begin{DEF}
Similarly as in \cite{zajac} we introduce the compact set
$$D^*:=\left\lbrace \xi\in\CC^N:\xi\bullet z\neq 1\text{ for all }z\in D\right\rbrace.$$
If $\xi\in D^*$, then the function 
\begin{equation}\label{eq_h_xi_definition}
h_\xi(z):=\left(1-z\bullet\xi\right)^{-N}.
\end{equation}
belongs to $\OO(D)$ and
\begin{equation}\label{eq_h_xi_taylor_series_0}
h_\xi(z)=\sum_{\alpha\in\NN^N}\frac{(|\alpha|+N-1)!}{\alpha!(N-1)!}\xi^\alpha z^\alpha
\end{equation}
is its Taylor series expansion at the origin.
\end{DEF}

\begin{LEM}\label{lem_upper_bound_on_tau_s_G_if_runge_domain}
Assume that $G$ is Runge domain and $U\subset\CC^N$ is an open set.
If a domain $\Omega\subset\CC^N$ contains the origin and $h_\xi *g\in\OO_{0,\Omega}$ for all $\xi\in U$ and $g\in\OO_{0,G}$, then $$U\cdot\Omega\subset h_{\NONES{N}}*G.$$
\end{LEM}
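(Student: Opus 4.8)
The plan is to exploit the key identity $h_\xi(z) = h_{\NONES{N}}(\xi z)$, which follows immediately from $z\bullet\xi = (\xi z)\bullet\NONES{N}$ and the definition \eqref{eq_h_xi_definition}. Passing to Taylor coefficients via \eqref{eq_h_xi_taylor_series_0}, this means that for any $g(z)=\sum g_\alpha z^\alpha$ one has
$$h_\xi * g = \sum_{\alpha}\frac{(|\alpha|+N-1)!}{\alpha!(N-1)!}\,\xi^\alpha g_\alpha z^\alpha = \bigl(h_{\NONES{N}} * g_\xi\bigr)(z),$$
where $g_\xi(z):=g(\xi z)$ has Taylor expansion $\sum_\alpha \xi^\alpha g_\alpha z^\alpha$. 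So the hypothesis that $h_\xi*g$ extends to $\Omega$ for all $\xi\in U$ will be recast as a statement about $h_{\NONES{N}}*g_\xi$.

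Here is the order of steps. First I would fix an arbitrary point $w\in U\cdot\Omega$, writing $w=\xi_0 u$ with $\xi_0\in U$ and $u\in\Omega$, and fix $g\in\OO_{0,G}$; the goal is to show $h_{\NONES{N}}*g$ extends holomorphically to a neighbourhood of $w$ along a suitable path, so that $w\in h_{\NONES{N}}*G$. Second, since $G$ is Runge, the coordinate dilation $g\mapsto g_\xi = g(\xi\,\cdot\,)$ maps $\OO_{0,G}$ into $\OO_{0,\xi^{-1}G}$ for $\xi$ with all coordinates nonzero, and more to the point $g_{\xi_0}\in\OO_{0,G'}$ for an appropriate Runge domain $G'$; one checks that $g_{\xi_0}\in\OO_{0,G}$ is generally false, so instead I would work with the germ directly. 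Third — and this is the crux — I would invoke the hypothesis at the point $\xi_0$: since $h_{\xi_0}*\tilde g\in\OO_{0,\Omega}$ for every $\tilde g\in\OO_{0,G}$, and $h_{\xi_0}*\tilde g = h_{\NONES{N}}*\tilde g_{\xi_0}$, the family $\{h_{\NONES{N}}*\tilde g_{\xi_0} : \tilde g\in\OO_{0,G}\}$ lies in $\OO_{0,\Omega}$. Now observe that as $\tilde g$ ranges over $\OO_{0,G}$, the germ $\tilde g_{\xi_0}$ ranges over $\OO_{0,\xi_0^{-1}\!\cdot G}$ (again using that $\xi_0$ has nonzero coordinates, which holds since otherwise $\xi_0 u$ would not contribute new points — here one may need to treat the closure/limiting case by a perturbation argument, approximating $\xi_0$ by points of $U$ with nonzero coordinates). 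So $h_{\NONES{N}}*g'\in\OO_{0,\Omega}$ for every $g'\in\OO_{0,\,\xi_0^{-1}\cdot G}$, which by the very definition of $h_{\NONES{N}}*(\xi_0^{-1}\!\cdot G)$ as the \emph{largest} such domain gives $\Omega\subset h_{\NONES{N}}*(\xi_0^{-1}\!\cdot G)$. Fourth, I would use the evident dilation-equivariance of the germ-times-domain product: for $\lambda\in(\CC_*)^N$ one has $\tau*(\lambda\cdot G) = \lambda\cdot(\tau*G)$ whenever $\tau$ is a germ whose coefficients are dilation-stable in the appropriate sense; since $h_{\NONES{N}}$ is \emph{not} dilation-invariant, the correct statement is rather $h_{\NONES{N}}*(\lambda\cdot G)\subset\lambda\cdot\bigl(h_\lambda*G\bigr)$ or something comparable, so I would instead argue as follows: from $\Omega\subset h_{\NONES{N}}*(\xi_0^{-1}\!\cdot G)$ and $u\in\Omega$ we get that $h_{\NONES{N}}*g'$ extends past $u$ for all $g'\in\OO_{0,\xi_0^{-1}\cdot G}$; unwinding via $g' = \tilde g_{\xi_0}$ and $h_{\NONES{N}}*\tilde g_{\xi_0} = h_{\xi_0}*\tilde g$, and then using $h_{\xi_0}(z) = h_{\NONES{N}}(\xi_0 z)$ once more — but now in the $z$ variable, i.e. $(h_{\xi_0}*\tilde g)(z)$ evaluated near $u$ corresponds to $(h_{\NONES{N}}*(\text{something}))$ evaluated near $\xi_0 u = w$ — one concludes $h_{\NONES{N}}*\tilde g$ extends past $w$. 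Since $\tilde g\in\OO_{0,G}$ was arbitrary, $w\in h_{\NONES{N}}*G$, and since $w\in U\cdot\Omega$ was arbitrary, we are done.

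The main obstacle I anticipate is the bookkeeping in the last step: one must carefully track whether a given dilation acts on the source variable $z$ of the functions or on the auxiliary variable $\xi$, because $h_{\NONES{N}}$ fails to be homogeneous and so the two actions do not commute in the naive way. The clean way to organize this is to package everything through the single identity $h_\xi*g = h_{\NONES{N}}*(g\circ M_\xi)$, where $M_\xi$ denotes coordinatewise multiplication by $\xi$, together with the functorial behaviour of the germ--domain $*$-product under the precomposition $g\mapsto g\circ M_\xi$, namely $\tau*(M_\xi^{-1}D) \supseteq M_\xi^{-1}(\tau*D)$ or the reverse inclusion as appropriate. A secondary technical point is handling points $\xi_0\in U$ (or in $\overline U$, if one wants $U\cdot\Omega$ to include boundary contributions) having a zero coordinate: there the dilation $M_{\xi_0}$ is degenerate, so one should first establish the inclusion for $\xi_0\in U\cap(\CC_*)^N$ and then pass to the general case using that $U\cdot\Omega$ is open and $(\CC_*)^N$ is dense, together with the fact that $h_{\NONES{N}}*G$ is itself open. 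Modulo these care-points, the argument is a direct composition of the reflection identity with the maximality property defining the various $*$-products.
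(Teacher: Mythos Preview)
Your core identity is right, but you are using it the hard way. From $h_\xi(z)=h_{\NONES{N}}(\xi z)$ and \eqref{eq_h_xi_taylor_series_0} you get immediately, for $\xi\in(\CC_*)^N$ and $z$ near $0$,
\[
(h_\xi*g)(z)=(h_{\NONES{N}}*g)(\xi z),
\]
with the dilation acting on the variable $z$, not on $g$. The hypothesis says the left side lies in $\OO_{0,\Omega}$, hence $h_{\NONES{N}}*g\in\OO_{0,\xi\Omega}$ for every $g\in\OO_{0,G}$, and by maximality $\xi\Omega\subset h_{\NONES{N}}*G$. That is the whole argument for $\xi\in U\cap(\CC_*)^N$: your detour through $g_\xi$, the auxiliary domain $\xi^{-1}G$, and the equivariance of $\tau*(\lambda\cdot G)$ is circular --- at the end of your Step~4 you come back exactly to this identity in the $z$-variable, and the intermediate objects cancel. (The ``something'' you could not name is simply $\tilde g$ itself.)

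There is a genuine gap in your treatment of $\xi_0\in U\setminus(\CC_*)^N$. Density of $(\CC_*)^N$ in $U$ and openness of $h_{\NONES{N}}*G$ only give $\xi_0 u\in\overline{h_{\NONES{N}}*G}$, not membership. Nor is it true in general that $(U\cap(\CC_*)^N)\cdot\Omega=U\cdot\Omega$ as sets, so you cannot rewrite $\xi_0 u$ as $\xi' u'$ with $\xi'\in(\CC_*)^N$. What is actually needed is that $h_{\NONES{N}}*G$ is a \emph{Runge} domain: choose $r>0$ with $\xi_0+r\TT^N\subset U\cap(\CC_*)^N$, so that $(\xi_0+r\TT^N)\cdot z\subset h_{\NONES{N}}*G$ for each $z\in\Omega$ by the already-proved case; then the polynomial hull of this torus, which contains $\xi_0 z$, also lies in $h_{\NONES{N}}*G$. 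This is how the paper closes the degenerate case, and some convexity input of this kind is unavoidable.
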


\begin{proof}
We need to show that $\xi\Omega\subset h_{\NONES{N}}*G$ for each $\xi\in U$.
If $\xi\in U\cap(\CC_*)^N$, then, in view of \eqref{eq_h_xi_taylor_series_0}, for $g\in\OO_{0,G}$ and $z$ close to $0$ we have
$$(h_\xi *g)(z) = (h_{\NONES{N}}*g)(\xi z),$$
so the germ of the function on the right hand side belongs to $\OO_{0,\Omega}$.
Hence, $h_{\NONES{N}}*g\in\OO_{0,\xi\Omega}$.
This holds for every $g\in\OO_{0,G}$, so the definition of $h_{\NONES{N}}*G$ yields that it indeed contains $\xi\Omega$.
On the other hand, if $\xi\in U\setminus(\CC_*)^N$, then we can take a number $r>0$ so that $\xi+r\TT^N\subset U\cap(\CC_*)^N$.
From the previous considerations it follows that $$(\xi+r\TT^N)\cdot\Omega\subset h_{\NONES{N}}*G.$$
Since the set on the right hand side is Runge domain, for each $z\in\Omega$ it contains the polynomial hull of $(\xi+r\TT^N)\cdot z$ and, in particular, the point $\xi\cdot z$ itself.
\end{proof}

\begin{LEM}\label{lem_upper_bound_on_D_s_G}
If $D$ is a pseudoconvex domain and $G$ is Runge domain, then
$$D^*\cdot (D*G)\subset h_{\NONES{N}}*G.$$
Consequently,
$$D*G\subset\CCO\lbrace z\in\CC^N: zD^*\subset h_{\NONES{N}}*G\rbrace.$$
\end{LEM}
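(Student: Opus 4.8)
The plan is to deduce the first inclusion from Lemma \ref{lem_upper_bound_on_tau_s_G_if_runge_domain} by taking $U$ to be the interior of $D^*$ and $\Omega = D*G$, and then bootstrapping from $\INT D^*$ to all of $D^*$. First I would verify the hypotheses of Lemma \ref{lem_upper_bound_on_tau_s_G_if_runge_domain}: for every $\xi\in\INT D^*$ the function $h_\xi$ lies in $\OO(D)$ (this is recorded in the definition of $h_\xi$, since $\INT D^*\subset D^*$), hence for each $g\in\OO_{0,G}$ the germ $h_\xi *g$ is the germ at $0$ of $h_\xi *_{D,G} g\in\OO(D*G)$, so $h_\xi *g\in\OO_{0,D*G}$. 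Applying Lemma \ref{lem_upper_bound_on_tau_s_G_if_runge_domain} with $U=\INT D^*$ and $\Omega=D*G$ gives $(\INT D^*)\cdot(D*G)\subset h_{\NONES{N}}*G$.

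Next I would upgrade $\INT D^*$ to $D^*$. Fix $\xi\in D^*$ and $z\in D*G$; I want $\xi z\in h_{\NONES{N}}*G$. Here I would use that $D$ is pseudoconvex, which should force $D^*=\overline{\INT D^*}$, or more precisely that every point of $D^*$ is a limit of points of $\INT D^*$ (the complement of $D^*$ in $\CC^N$ is the union of the hyperplane complements $\{\xi:\xi\bullet z=1\}$ over $z\in D$, and one checks that $D^*$ has no isolated points — indeed $0$ has a whole polydisc neighbourhood inside $D^*$, and a convexity-type argument propagates this). Alternatively, and more robustly, I would argue directly as in the proof of Lemma \ref{lem_upper_bound_on_tau_s_G_if_runge_domain}: since $h_{\NONES{N}}*G$ is a Runge domain containing $(\INT D^*)\cdot z$, it contains the polynomial hull of the latter set, and one needs $\xi z$ to lie in that hull. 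The cleanest route is to choose $r>0$ small with $\xi + r\TT^N\subset D^*$ — this is possible because $D^*$ is closed with nonempty interior near $0$ and, crucially, because a torus around any point of $D^*$ that is small enough still avoids all the hyperplanes $\{\eta\bullet z=1\}$, $z\in D$ (using that $D$ is bounded, so these hyperplanes stay away from $\xi$ uniformly); then $h_{\eta}\in\OO(D)$ for all $\eta\in\xi+r\TT^N$, so $(\xi+r\TT^N)\cdot z\subset h_{\NONES{N}}*G$ by the first part, and the Runge (hence polynomially convex) property of $h_{\NONES{N}}*G$ supplies the point $\xi z$ as a point of the polynomial hull of $(\xi+r\TT^N)\cdot z$.

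For the ''consequently'' part: the inclusion $D^*\cdot(D*G)\subset h_{\NONES{N}}*G$ says exactly that every $z\in D*G$ satisfies $zD^*\subset h_{\NONES{N}}*G$, i.e. $D*G\subset\{z:zD^*\subset h_{\NONES{N}}*G\}$; since $D*G$ is connected and contains $0$, it is contained in the connected component of that set containing $0$, which is the assertion.

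I expect the main obstacle to be the passage from $\INT D^*$ to $D^*$ — specifically, producing for an arbitrary $\xi\in D^*$ a torus $\xi+r\TT^N\subset D^*$ (or otherwise realizing $\xi z$ in the polynomial hull). Everything here hinges on $D$ being bounded so that the hyperplanes $\{\eta\bullet z=1\}$, $z\in D$, do not accumulate at $\xi$; if $D$ were merely pseudoconvex and unbounded this step could fail, so I would flag where boundedness of $D$ is actually used. (If $D$ is a non-decreasing union of bounded domains, one applies the bounded case to each member and passes to the limit.)
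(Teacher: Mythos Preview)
Your plan has a genuine gap at the ``upgrade from $\INT D^*$ to $D^*$'' step, and the repair you propose does not work.

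The claim that for $\xi\in D^*$ one can find $r>0$ with $\xi+r\TT^N\subset D^*$ is false, even when $D$ is bounded. Take $D$ to be the unit ball in $\CC^2$; then $D^*=\{\xi:|\xi|\le 1\}$, and for $\xi=(1,0)\in\partial D^*$ and any $r>0$ the point $(1+r,r)\in\xi+r\TT^2$ has norm greater than $1$. The heuristic ``the hyperplanes $\{\eta\bullet z=1\}$ stay uniformly away from $\xi$'' breaks down precisely on $\partial D^*$: there exist $z_n\in D$ with $\xi\bullet z_n\to 1$, so the distances go to zero. Your alternative route via $D^*=\overline{\INT D^*}$ fares no better: it only yields $\xi z\in\overline{h_{\NONES{N}}*G}$, not membership in the open set $h_{\NONES{N}}*G$. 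Note also that your argument never really uses pseudoconvexity of $D$; that should be a warning sign.

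The paper's proof avoids the boundary issue by first exploiting the \emph{continuity} of $*_{D,G}$. For a fixed $\Omega\subset\subset D*G$ one obtains a compact, holomorphically convex $K\subset D$ and an estimate $\|f*_{D,G}g\|_\Omega\le C\|f\|_K\|g\|_L$. Because $K$ is compact, there is a genuinely \emph{open} neighbourhood $U$ of $D^*$ with $\eta\bullet z\neq 1$ for all $\eta\in U$, $z\in K$; thus $h_\eta$ is holomorphic in a neighbourhood of $K$ for every $\eta\in U$. Now pseudoconvexity enters, via the Oka--Weil theorem: each such $h_\eta$ is a uniform-on-$K$ limit of functions in $\OO(D)$, and the continuity estimate transports this to $h_\eta*g\in\OO_{0,\Omega}$. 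With this open $U$ in hand, Lemma~\ref{lem_upper_bound_on_tau_s_G_if_runge_domain} applies directly and gives $D^*\cdot\Omega\subset U\cdot\Omega\subset h_{\NONES{N}}*G$. The missing idea in your plan is this passage through a compact $K$ to manufacture the open $U$, together with the Oka--Weil approximation that makes pseudoconvexity do real work.
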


\noindent
It is worthy to note that, as $D^*$ is compact, the set under $\CCO$ above is open.

\begin{proof}
Fix an arbitrary domain $0\in\Omega\subset\subset D*G$.
From the continuity of $*_{D,G}$ it follows that we can find a constant $C>0$ and compact sets $K\subset D$ and $L\subset G$ such that $K$ is holomorphically convex in $D$, $0\in\INT K$ and
\begin{equation}\label{eq_luboDsG_f_sDG_g_leq_C_f_K_g_L}
\|f*_{D,G}g\|_{\Omega}\leq C\|f\|_K\|g\|_L 
\end{equation}
for all $f\in\OO(D)$ and $g\in\OO(G)$.

Take a neighbourhood $U$ of $D^*$ such that $\eta\bullet z\neq 1$ for all $z\in K$ and $\eta\in U$.
If $\eta\in U$, then $h_\eta$ is holomorphic in a neighbourhood of $K$, so, by the Oka-Weil theorem, it can be approximated uniformly on $K$ by a sequence $(f_n)_{n\in\NN}\subset\OO(D)$.
Hence, if $g\in\OO(G)$, then \eqref{eq_luboDsG_f_sDG_g_leq_C_f_K_g_L} gives that the functions $f_n *_{D,G}g$ form a Cauchy sequence with respect to the supremum norm on $\Omega$.
This means that they converge in $\OO(\Omega)$ and, thanks to the fact that $0\in\INT K$, the limit has to be equal to $h_\eta *g$ near the origin.
Consequently, $h_\eta *g\in\OO_{0,\Omega}$ for every $g\in\OO_{0,G}$ and $\eta\in U$.
Now, Lemma \ref{lem_upper_bound_on_tau_s_G_if_runge_domain} allows us to conclude that $D^*\cdot\Omega\subset U\cdot\Omega\subset h_{\NONES{N}}*G$, what completes the proof.
\end{proof}

\begin{LEM}\label{lem_cc0_nondecreasing_union}
Let $0\in W$ and $0\in D_0\subset D_1\subset D_2\subset\ldots$ be domains in $\CC^N$ such that $\bigcup_{n\in\NN}D_n=D$.
Set $$\Omega_n=\CCO\lbrace z\in\CC^N: zD_n^*\subset W\rbrace,\quad \Omega=\CCO\lbrace z\in\CC^N: zD^*\subset W\rbrace.$$
Then $\Omega_0\subset\Omega_1\subset\Omega_2\subset\ldots$ and $\bigcup_{n\in\NN}\Omega_n=\Omega$.
\end{LEM}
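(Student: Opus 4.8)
The plan is to prove the two inclusions $\bigcup_n\Omega_n\subseteq\Omega$ and $\Omega\subseteq\bigcup_n\Omega_n$ separately: the first is a routine monotonicity observation, while the second is a connectedness argument whose crux is a compactness property of the sets $D_n^*$.

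First I would record the elementary facts about the duals. Since $D_n\subseteq D_{n+1}$, the condition defining $D_{n+1}^*$ is more restrictive than that defining $D_n^*$, so $D_0^*\supseteq D_1^*\supseteq\cdots$; and $\xi\in D^*$ iff $\xi\bullet z\neq 1$ for every $z\in\bigcup_n D_n$, i.e.\ iff $\xi\in D_n^*$ for all $n$, so $D^*=\bigcap_n D_n^*$. Each $D_n^*$ is compact: it is bounded, because if $\PD_N(0,r)\subseteq D_n$ then $\overline\xi/\|\xi\|^2\in\PD_N(0,r)$ and $\xi\bullet(\overline\xi/\|\xi\|^2)=1$ as soon as $\|\xi\|>1/r$, so $D_n^*$ lies in the ball of radius $1/r$; and it is closed, because its complement $\{\xi:\exists\,z\in D_n,\ \xi\bullet z=1\}$ is open (near a point $\xi_0$ with $\xi_0\bullet z_0=1$ one has $z_0\neq 0$, and $\xi\bullet\big(\lambda(\xi)z_0\big)=1$ with $\lambda(\xi)=1/(\xi\bullet z_0)$ close to $1$, so $\lambda(\xi)z_0\in D_n$). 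In particular $D^*$ is compact. Now put $A_n=\{z:zD_n^*\subseteq W\}$ and $A=\{z:zD^*\subseteq W\}$. From $D^*\subseteq D_{n+1}^*\subseteq D_n^*$ we get $A_n\subseteq A_{n+1}\subseteq A$; each of these sets contains $0$ (as $0\cdot D_n^*=\{0\}\subseteq W$) and each is open, since $W$ is open and $D_n^*,D^*$ are compact (the standard fact that $zK\subseteq W$ for $z$ near $z_0$ whenever $z_0K\subseteq W$, $K$ compact, $W$ open). Passing to the connected components through $0$ yields $\Omega_0\subseteq\Omega_1\subseteq\cdots\subseteq\Omega$; in particular $\bigcup_n\Omega_n$ is open, connected (an increasing union of connected sets through $0$) and contained in $\Omega$.

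For the reverse inclusion it suffices, since $\Omega$ is connected and $\bigcup_n\Omega_n$ is a non-empty open subset of it, to show that $\bigcup_n\Omega_n$ is relatively closed in $\Omega$. Fix $z_0\in\Omega$ in the closure of $\bigcup_n\Omega_n$. As $\Omega$ is open, pick a closed ball $\overline B=\overline{B(z_0,\varrho)}\subseteq\Omega\subseteq A$, so $\overline B\,D^*\subseteq W$. The heart of the proof is to promote this to $\overline B\,D_n^*\subseteq W$ for some $n$. For this one checks that the sets $\overline B\,D_n^*$ are compact, decreasing, and satisfy $\bigcap_n\overline B\,D_n^*=\overline B\,D^*$: the inclusion $\supseteq$ is immediate from $D^*\subseteq D_n^*$, and for $\subseteq$ a point $w$ of the left-hand side is written $w=b_n\xi_n$ with $b_n\in\overline B$, $\xi_n\in D_n^*$, and, extracting a subsequence with $b_{n_j}\to b\in\overline B$ and $\xi_{n_j}\to\xi$, one gets $w=b\xi$ with $\xi\in D_m^*$ for every $m$ (since $\xi_{n_j}\in D_{n_j}^*\subseteq D_m^*$ once $n_j\geq m$ and $D_m^*$ is closed), whence $\xi\in D^*$. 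Then $\bigcap_n(\overline B\,D_n^*\setminus W)=\varnothing$ is a decreasing intersection of compact sets, so some $\overline B\,D_n^*\setminus W$ is empty, i.e.\ the open ball $B=B(z_0,\varrho)$ is contained in $A_n$. Finally, since $z_0$ is a limit point of $\bigcup_k\Omega_k$, some $\Omega_m$ meets $B$, say at $z_1$; with $k=\max(m,n)$ we have $B\subseteq A_n\subseteq A_k$ and $\Omega_k\subseteq A_k$, the set $B\cup\Omega_k$ is connected (it contains $z_1\in\Omega_m\subseteq\Omega_k$) and contains $0$, hence $B\cup\Omega_k\subseteq\CCO A_k=\Omega_k$. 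Thus $z_0\in B\subseteq\Omega_k\subseteq\bigcup_j\Omega_j$, which establishes the closedness and hence the lemma.

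The one step that is not pure bookkeeping is the identity $\bigcap_n\overline B\,D_n^*=\overline B\,D^*$: it interchanges a countable intersection with the (coordinate-wise) set product, so it does not follow formally and genuinely uses that the $D_n^*$ are compact and nested. Everything else reduces to elementary manipulations with the operator $\CCO$ together with the openness of $W$ and the compactness of the duals.
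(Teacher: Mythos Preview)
Your proof is correct and follows essentially the same approach as the paper: both rely on the compactness of the $D_n^*$, the identity $D^*=\bigcap_n D_n^*$, and a connectedness argument. The paper's version is a bit more streamlined---it exhausts $\Omega$ by connected compact sets $K\ni 0$, observes that $K\cdot D^*\subset W$ implies $K\cdot U\subset W$ for a neighbourhood $U$ of $D^*$, and then uses $D_{n_0}^*\subset U$ (from the decreasing compact intersection) to place $K$ inside a single $\Omega_{n_0}$, thereby bypassing both your clopen argument and the explicit verification that $\bigcap_n\overline B\,D_n^*=\overline B\,D^*$.
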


\begin{proof}
Clearly, $D^*\subset D_{n+1}^*\subset D_n^*$, so $\Omega_n\subset\Omega_{n+1}\subset\Omega$.
To show that $\Omega$ is contained in $\bigcup_{n\in\NN}\Omega_n$, take an arbitrary connected compact set $0\in K\subset\Omega$.
One has that $K\cdot D^*\subset W$, so $K\cdot U\subset W$ for a neighbourhood $U$ of $D^*$.
The sequence $(D_n^*)_{n\in\NN}$ decreases and it is straightforward to check that $D^*=\bigcap_{n\in\NN}D_n^*$.
Therefore, $D_{n_0}^*\subset U$ for some $n_0$, what gives that $K\cdot D_{n_0}^*\subset W$.
Hence, $K\subset\Omega_{n_0}$, because $K$ is connected and contains the origin.
From this we conclude that $\Omega\subset\bigcup_{n\in\NN}\Omega_n$.
\end{proof}

\begin{DEF}
Similarly as in \cite{zajac} we define $\mathcal{D}_N$ as the family of all domains in $\CC^N$ containing the origin which are countable unions of non-decreasing sequences of bounded smooth linearly convex domains. 
\end{DEF}

Recall that $D$ is called \emph{linearly convex} if through every point of $\CC^N\setminus D$ one can pass an affine complex hyperplane disjoint from $D$.
As it is described in \cite[Remark 4.11]{zajac}, each element of $\mathcal{D}_N$, being a union of a non-decreasing sequence of Runge domains, is also Runge domain.

\begin{THE}\label{th_D_s_G_when_D_from_DN}
If $D\in\mathcal{D}_N$ and $G$ is Runge domain, then $$D*G=\CCO\lbrace z\in\CC^N: zD^*\subset h_{\NONES{N}}*G\rbrace.$$
\end{THE}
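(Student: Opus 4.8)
The plan is to prove the two inclusions separately. The inclusion ``$\subseteq$'' is already contained in Lemma \ref{lem_upper_bound_on_D_s_G}: since each $D\in\mathcal{D}_N$ is pseudoconvex (indeed Runge) and $G$ is Runge, we get $D^*\cdot(D*G)\subset h_{\NONES{N}}*G$, hence $D*G\subset\CCO\lbrace z:zD^*\subset h_{\NONES{N}}*G\rbrace$. So the whole work is in the reverse inclusion ``$\supseteq$'', i.e. showing that for every $f\in\OO(D)$, $g\in\OO(G)$, the product $f*g$ extends holomorphically to $\Omega:=\CCO\lbrace z:zD^*\subset h_{\NONES{N}}*G\rbrace$.

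First I would reduce to the case of a single bounded smooth linearly convex domain. If $D=\bigcup_n D_n$ with $D_n$ bounded smooth linearly convex and non-decreasing, then $\OO(D)=\varprojlim\OO(D_n)$, so every $f\in\OO(D)$ restricts to each $D_n$; moreover $D*G\supseteq D_n*G$ for all $n$, and in fact $D*G=\CCO\bigcup_n(D_n*G)$ by a standard exhaustion argument. On the geometric side, Lemma \ref{lem_cc0_nondecreasing_union} applied with $W=h_{\NONES{N}}*G$ tells us precisely that $\CCO\lbrace z:zD^*\subset W\rbrace=\bigcup_n\CCO\lbrace z:zD_n^*\subset W\rbrace$. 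Hence it suffices to prove the theorem when $D$ itself is bounded, smooth and linearly convex, and then pass to the limit. So assume from now on $D$ is of this type.

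Now fix $f\in\OO(D)$, $g\in\OO(G)$, and let $z_0\in\Omega$; I want to show $f*g$ continues holomorphically to a neighbourhood of $z_0$, and in fact along a path in $\Omega$ from $0$ to $z_0$. The tool is the non-symmetric integral formula \eqref{eq_sum_N_factorial_f_alpha_g_alpha_z_alpha_integral_formula}: with a smooth section $\varphi:V\to\CC^N$ of the hyperplane bundle ($\zeta\bullet\varphi(\zeta)=1$ on a neighbourhood $V$ of $\partial D$), we have
$$
\sum_{\alpha}\frac{\alpha!(N-1)!}{(|\alpha|+N-1)!}f_\alpha g_\alpha z^\alpha
= c_N(N-1)!\int_{\partial D}f(\zeta)\,g(\varphi(\zeta)z)\,\omega_\varphi(\zeta)
$$
for $z$ near $0$. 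The left side is $h_{\NONES{N}}$-modified, in the sense that by \eqref{eq_h_xi_taylor_series_0} it equals $(h_{\NONES{N}}^{-1}\text{-weighted}) f*g$; more precisely, writing $F(w):=(f*g)$-type object, the combinatorial factor $\frac{\alpha!(N-1)!}{(|\alpha|+N-1)!}$ is exactly the reciprocal of the Taylor coefficient of $h_{\NONES{N}}$ at $\NONES{N}$, so the left side is $f\,\widetilde*\,g$ where $\widetilde*$ differs from $*$ by a fixed invertible ``$h_{\NONES{N}}$-deconvolution''. The point is: because $\varphi(\partial D)$ is a compact subset of $D^*$ (linear convexity of $D$ is what guarantees $\varphi$ maps into $D^*$, after a standard choice — through each $\zeta\in\partial D$ passes a complex hyperplane $\{w:w\bullet\varphi(\zeta)=1\}$ disjoint from $D$), for $z$ such that $z\cdot\varphi(\partial D)\subset h_{\NONES{N}}*G$, the integrand $g(\varphi(\zeta)z)$ is replaced by $(h_{\NONES{N}}*_{G}g)(\varphi(\zeta)z)$ — wait, that is not quite the integrand; rather the integrand involves $g$ itself, but the \emph{modified} product $f\,\widetilde*\,g$ is what the integral computes, and one recovers $f*g$ by applying the fixed operator $\cdot\,*\,h_{\NONES{N}}$ (coefficientwise multiplication by the coefficients of $h_{\NONES{N}}$), which is well-behaved. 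So the strategy is: the right-hand integral defines a holomorphic function of $z$ wherever $z\cdot\varphi(\partial D)$ stays inside $h_{\NONES{N}}*G$ and the path of $z$ from the origin can be chosen so that $z\cdot\varphi(\partial D)$ traces a homotopy inside $h_{\NONES{N}}*G$ (using that $h_{\NONES{N}}*_G g$ is a genuine global holomorphic function on the Runge — hence simply-defined — domain $h_{\NONES{N}}*G$, so no monodromy obstruction); then apply the fixed ``multiply by $h_{\NONES{N}}$-coefficients'' operator, which is nothing but convolving against $h_{\NONES{N}}$, i.e. a $*$-product with a fixed germ, and this operator sends $\OO_{0,W}$ into $\OO_{0,h_{\NONES{N}}*W}$-type sets — but since we will have arranged things so the output domain is exactly $\Omega$, we are done. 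Concretely: for $z_0\in\Omega$ we have $z_0\cdot D^*\subset h_{\NONES{N}}*G$, so a fortiori $z_0\cdot\varphi(\partial D)\subset h_{\NONES{N}}*G$; and since $\Omega$ is connected and open we can join $0$ to $z_0$ inside $\Omega$, along which $z\cdot\varphi(\partial D)$ stays in the Runge (hence in particular holomorphically convex, and for the monodromy we actually only need that $h_{\NONES{N}}*_Gg$ is single-valued on it) domain $h_{\NONES{N}}*G$; thus the integral continues $f\,\widetilde*\,g$ along this path, and then $f*g$ as well.

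The hard part will be the last step: going from the modified product $f\,\widetilde*\,g$ (which the integral manifestly continues) back to the true $f*g$, one must multiply coefficientwise by the coefficients of $h_{\NONES{N}}$, i.e. form $h_{\NONES{N}}*(f\,\widetilde*\,g)$, and show this extra operation does not shrink the domain of continuation. In other words one needs that $h_{\NONES{N}}$-convolution maps $\OO_{0,X}$ into $\OO_{0,X}$ whenever $X=h_{\NONES{N}}*G$ — or more precisely, one must show that the set we have produced via the integral is $\Omega$ and not something smaller. I expect this to require a separate argument of the same flavour as \cite[Proposition 4.1]{zajac} together with the identity $h_{\NONES{N}}*(h_{\NONES{N}}*g)$-type manipulations, and possibly a direct estimate: the operator $g\mapsto h_{\NONES{N}}*g$ is given by an explicit integral (the one-variable-type formula \eqref{eq_f_s_g_coordinate_wise} with $f=h_{\NONES{N}}$), and one verifies it preserves continuability to any Runge domain of the appropriate form. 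Once that compatibility is in place, combining with the ``$\subseteq$'' direction from Lemma \ref{lem_upper_bound_on_D_s_G} and the limiting argument via Lemma \ref{lem_cc0_nondecreasing_union} closes the proof.
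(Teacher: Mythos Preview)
Your inclusion ``$\subseteq$'' and the reduction to bounded smooth linearly convex $D$ via Lemma~\ref{lem_cc0_nondecreasing_union} are correct and match the paper. The gap is in the reverse inclusion. You have the right integral formula \eqref{eq_sum_N_factorial_f_alpha_g_alpha_z_alpha_integral_formula} and you correctly observe that the weight $\frac{\alpha!(N-1)!}{(|\alpha|+N-1)!}$ is the reciprocal of the $\alpha$-th Taylor coefficient of $h_{\NONES{N}}$, but you draw the wrong conclusion from this. The point is \emph{not} to use the integral with $g$ in it to continue the modified product $f\,\widetilde*\,g$ and then try to convolve with $h_{\NONES{N}}$ afterwards; it is to apply \eqref{eq_sum_N_factorial_f_alpha_g_alpha_z_alpha_integral_formula} with $g$ replaced by the function $h_{\NONES{N}}*_G g\in\OO(h_{\NONES{N}}*G)$. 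With that substitution the left-hand side becomes exactly $\sum_\alpha f_\alpha g_\alpha z^\alpha=(f*g)(z)$ because the weights cancel, and the integrand on the right becomes $f(\zeta)\,(h_{\NONES{N}}*_G g)(\varphi(\zeta)z)\,\omega_\varphi(\zeta)$, which is well-defined and holomorphic in $z$ precisely when $z\cdot\varphi(\partial D')\subset h_{\NONES{N}}*G$. Since $\varphi(\partial D')\subset D^*$ by linear convexity, this holds for every $z$ in a relatively compact subdomain $U\subset\subset\Omega$. That is the entire argument; there is no ``hard part''.

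As written, your plan also contains an internal inconsistency: you assert that the integral with plain $g$ in the integrand continues to the region where $z\cdot\varphi(\partial D)\subset h_{\NONES{N}}*G$, but with $g$ itself in the integrand one would instead need $z\cdot\varphi(\partial D)\subset G$, which is a different (and unrelated) condition. And the proposed final step---showing that the operator $h_{\NONES{N}}*(\cdot)$ sends $\OO_{0,\Omega}$ into itself for this particular $\Omega$---is neither obvious nor needed. The single substitution $g\rightsquigarrow h_{\NONES{N}}*_G g$ in \eqref{eq_sum_N_factorial_f_alpha_g_alpha_z_alpha_integral_formula} sidesteps all of this, and no monodromy considerations arise since $h_{\NONES{N}}*_G g$ is a single-valued holomorphic function on $h_{\NONES{N}}*G$.
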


\begin{proof}
The left-to-right inclusion was established in Lemma \ref{lem_upper_bound_on_D_s_G}, so it remains to prove the opposite one.
In virtue of Lemma \ref{lem_cc0_nondecreasing_union} and \cite[Proposition 4.5]{zajac} it suffices to restrict our considerations to the case when $D$ is bounded, smooth and linearly convex.
Then there exists a smooth map $\nu_D$ from a neighbourhood of $\partial D$ to $\CC^N$ such that at each point $w\in\partial D$ its value $\nu_D(w)$ is the unit outward normal vector for $D$ at $w$.
Hence, for $w\in\partial D$ the equation $(z-w)\bullet\BNU_D(w)=0$ describes the only complex hyperplane passing through $w$ and disjoint from $D$.
In particular, $w\bullet\BNU_D(w)\neq 0$, as $0\in D$.
This means that the mapping $$\varphi:w\mapsto\BNU_D(w)\cdot (w\bullet\BNU_D(w))^{-1}$$ is well-defined and smooth in a neighbourhood $V$ of $\partial D$.
Moreover, we have that $\varphi(\partial D)\subset D^*$ and $w\bullet\varphi(w)=1$ for $w\in V$.

Denote by $\Omega$ the set on the right hand side of the conclusion and take a domain $0\in U\subset\subset\Omega$.
From the definition of $\Omega$ it follows that $U\cdot\varphi(\partial D')\subset h_{\NONES{N}}*G$ for a sufficiently large smooth domain $0\in D'\subset\subset D$ such that $\partial D'\subset V$.
If $f\in\OO(D)$ and $g\in\OO(G)$, then, by \eqref{eq_sum_N_factorial_f_alpha_g_alpha_z_alpha_integral_formula}, for $z$ lying near the origin it holds that
$$(f*g)(z) = c_N(N-1)!\int_{\partial D'}f(\zeta)(h_{\NONES{N}}*_G g)(\varphi(\zeta)z)\omega_{\varphi}(\zeta).$$
The integral on the right hand side defines a function of the variable $z$ which is holomorphic on $U$.
Consequently, $f*g\in\OO_{0,U}$.
Since $f$, $g$ and $U$ were taken arbitrarily, we conclude that $\Omega\subset D*G$.
\end{proof}

\section{Description of $h_{(1,1)}*D$ for Runge domains}\label{sect_h11_s_D_for_Runge}

This part is devoted to demonstration of Theorem \ref{th_description_of_h11_s_D}, which completes, although only in the two-dimensional case, the description of the star product of domains established in Theorem \ref{th_D_s_G_when_D_from_DN}.
Recall that $h_{(1,1)}$ is the function given by the formula \eqref{eq_h_xi_definition}, that is,
$$h_{(1,1)}(z_1,z_2)=(1-z_1-z_2)^{-2}=\sum_{\alpha_1,\alpha_2\in\NN}\frac{(\alpha_1+\alpha_2+1)!}{\alpha_1!\,\alpha_2!} z_1^{\alpha_1}z_2^{\alpha_2}.$$

\begin{AS}
In this section we assume that $D$ is a domain in $\CC^2$ containing the origin.
\end{AS}

\begin{DEF}
To simplify certain statements in this section, let us say that an open set $U\subset\CC_*$ \emph{separates} $0$ and $\infty$ if $U$ contains a loop homotopic in $\CC_*$ to the loop $\zeta\mapsto\zeta$.
This is equivalent to saying that $0$ and $\infty$ lie in different connected components of $\CCS\setminus U$.
\end{DEF}

For a point $z=(z_1,z_2)\in\CC^2$ introduce the mapping $I_z:\CC_*\to\CC^2$ as
$$I_z(\zeta):=(z_1(1+\zeta), z_2(1+\zeta^{-1})).$$
One has that $I_z(-1)=(0,0)$, so $I_z^{-1}(D)$ is non-empty.
It is also important that $I_z$ is an injective proper map when $z\in(\CC_*)^2$.

\begin{THE}\label{th_description_of_h11_s_D}
If $D$ is Runge domain, then
$$h_{(1,1)}*D=\CCO\left\lbrace z\in\CC^2:\text{the set }I_z^{-1}(D)\text{ separates }0\text{ and }\infty\right\rbrace.$$
\end{THE}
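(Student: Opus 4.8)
The plan is to prove the two inclusions separately. For the easier one, $h_{(1,1)}*D\subset\Omega$ where $\Omega$ denotes the right-hand side, I would start from the integral formula \eqref{eq_f_s_g_coordinate_wise} in the case $N=2$: for $f=h_{(1,1)}$ and $g\in\OO_{0,D}$, and $z\in(\CC_*)^2$ close to the origin, one has
$$(h_{(1,1)}*g)(z)=\frac{1}{2\pi i}\int_{\TT}\frac{1}{(1-z_1\zeta-z_2\zeta^{-1})^2}\,g(z_1\zeta,z_2\zeta^{-1})\,\frac{d\zeta}{\zeta}$$
after reducing the double contour integral to a single one (the $(1,1)$-weight turns the two-variable kernel into a function of $\zeta_1/\zeta_2$). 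Reparametrising via $I_z$ rewrites the integrand using $g\circ I_z$ and an explicit rational kernel in $\zeta$; the contour $\TT$ is then a loop separating $0$ and $\infty$. When $z$ lies in the set defining $\Omega$, the set $I_z^{-1}(D)$ also separates $0$ and $\infty$, so the contour of integration can be pushed to a loop inside $I_z^{-1}(D)$, along which $g\circ I_z$ is holomorphic; deforming also the pole structure of the kernel appropriately, one obtains a holomorphic continuation of $h_{(1,1)}*g$ to a neighbourhood of such $z$. Running this over the connected component of the origin and using that $h_{(1,1)}*D$ is by definition the largest such domain gives $h_{(1,1)}*D\subset\Omega$. (One must be careful that the kernel $(1-z\bullet I_z(\zeta)/\dots)$ has its own poles in $\zeta$, and check these do not obstruct the deformation; tracking them is a routine but delicate bookkeeping step.)

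For the reverse inclusion $\Omega\subset h_{(1,1)}*D$ — which the introduction flags as the hard part — I would need to show that for \emph{every} $g\in\OO_{0,D}$, the germ $h_{(1,1)}*g$ extends holomorphically to all of $\Omega$. The natural strategy is again to use the contour-integral representation above as the \emph{definition} of the extension on $\Omega$: for $z$ in the set cut out inside $\Omega$, pick a loop $\gamma_z$ inside $I_z^{-1}(D)$ homotopic to $\zeta\mapsto\zeta$ in $\CC_*$, and define $F(z)$ by integrating the kernel times $g\circ I_z$ over $\gamma_z$. The key points to verify are: (i) this is independent of the choice of $\gamma_z$ (Cauchy's theorem, provided the kernel's poles in $\zeta$ stay off $\gamma_z$ or are handled by including compensating small loops), (ii) $F$ is locally holomorphic in $z$ — one can locally choose $\gamma_z$ independent of $z$ since $I_z^{-1}(D)$ varies continuously, and then differentiate under the integral sign, and (iii) $F$ agrees with $h_{(1,1)}*g$ near the origin, which follows by comparing with \eqref{eq_f_s_g_coordinate_wise} once $\gamma_z$ can be taken to be $\TT$ for $z$ small. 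Since $\Omega$ is connected and contains $0$, and $F$ is a single-valued holomorphic function on $\Omega$ extending the germ, the definition of $h_{(1,1)}*D$ as the largest such domain yields $\Omega\subset h_{(1,1)}*D$.

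The main obstacle I anticipate is the interplay between the two sources of singularities of the integrand in $\zeta$: the zeros of $g$'s ambient obstruction (captured by $\zeta\notin I_z^{-1}(D)$) and the poles coming from the kernel $1-z\bullet(\text{something})$, i.e.\ the values of $\zeta$ at which $z_1(1+\zeta)+z_2(1+\zeta^{-1})=1$. For the deformation of contour in the first inclusion and for the well-definedness in the second, one must confirm that these kernel poles can be enclosed by auxiliary small loops whose contributions either vanish or combine into something still holomorphic in $z$; equivalently, one must understand how the roots of the quadratic $z_2\zeta^2+(z_1+z_2-1)\zeta+z_1=0$ move relative to $I_z^{-1}(D)$ as $z$ ranges over $\Omega$. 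Establishing that the candidate set $\Omega$ is \emph{exactly} the locus where no obstruction survives — rather than merely a subset — is precisely where the injectivity and properness of $I_z$ on $(\CC_*)^2$, the Runge hypothesis on $D$ (so that $I_z^{-1}(D)$'s complement structure in $\CCS$ is controlled), and a careful homotopy/winding-number argument all have to be brought together. I would expect to isolate this as a separate lemma about the geometry of $I_z^{-1}(D)$ and its relation to the roots of the above quadratic before assembling the two inclusions.
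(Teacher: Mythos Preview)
You have the two inclusions swapped, and as a consequence you are essentially proving the same (easy) inclusion twice while leaving the genuinely hard one untouched.

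Look at your first paragraph: you deform the contour into $I_z^{-1}(D)$ for $z$ in the defining set of $\Omega$, obtain a holomorphic extension of $h_{(1,1)}*g$ to a neighbourhood of each such $z$, and then invoke ``$h_{(1,1)}*D$ is the largest such domain''. But if every $h_{(1,1)}*g$ extends to $\Omega$, the maximality of $h_{(1,1)}*D$ gives $\Omega\subset h_{(1,1)}*D$, \emph{not} $h_{(1,1)}*D\subset\Omega$. Your second paragraph then repeats the same contour-deformation mechanism to define an extension $F$ on $\Omega$, again yielding $\Omega\subset h_{(1,1)}*D$. So both of your arguments target the inclusion that the paper's introduction calls straightforward, and neither addresses the direction the introduction flags as the main difficulty: proving that $\Omega$ is \emph{the largest} domain of extension, i.e.\ $h_{(1,1)}*D\subset\Omega$.

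That direction requires, for a boundary point $z\in\partial\Omega$ (where $I_z^{-1}(D)$ fails to separate $0$ and $\infty$), exhibiting some $f\in\OO(D)$ whose product $h_{(1,1)}*_D f$ cannot extend across $z$. No amount of contour deformation produces such an obstruction; you need a construction. The paper argues by contradiction: assuming $z\in h_{(1,1)}*D$, continuity gives an estimate $|(h_{(1,1)}*_D f)(z)|\le C\|f\|_K$ for a polynomially convex compact $K\subset D$. The non-separation hypothesis is then used, via a sequence of polynomial-convexity lemmas, to show that $K\cup I_z(\varrho\TT)$ is polynomially convex for small $\varrho$, so Oka--Weil furnishes polynomials $f_n$ converging to $0$ on $K$ but to $1$ on $I_z(\varrho\TT)$. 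Plugging these into the integral formula for $(h_{(1,1)}*f_n)(z)$ forces the value to $1$ while the estimate forces it to $0$. Separate ad hoc arguments handle points on the coordinate axes. None of this is hinted at in your proposal.

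A secondary remark: the kernel poles you worry about (the roots of $z_1(1+\zeta)+z_2(1+\zeta^{-1})=1$) are an artifact of placing $h_{(1,1)}$ in the first slot of the integral formula~\eqref{eq_f_s_g_coordinate_wise}. The paper instead places $h_{(1,1)}$ in the second slot (using $f*g=g*f$), evaluates the inner integral by Cauchy's formula, and obtains the clean identity
\[
(h_{(1,1)}*_{\CC^2}f)(z)=\frac{1}{2\pi i}\int_{\gamma}(1+\zeta^{-1})\,\Lambda(f)(I_z(\zeta))\,d\zeta,
\qquad \Lambda(f)=f+z_1\,\partial f/\partial z_1,
\]
valid for polynomials $f$ and any loop $\gamma$ homotopic to the identity in $\CC_*$. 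This eliminates the extraneous poles entirely and is what makes both directions of the proof tractable.
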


\noindent
Directly from the definition of separating is follows that the set under $\CCO$ above is open.
It also contains $(0,0)$, because $I_{(0,0)}^{-1}(D)=\CC_*$.

\begin{RM}
Assume that $D$ is Runge domain and take $z\in\CC^2$.
The open set $I_z^{-1}(D)$ is then $\OO(\CC_*)$-convex in the sense that $\HULL{L}{\CC_*}\subset I_z^{-1}(D)$ for every compact set $L\subset I_z^{-1}(D)$.
This means that every connected component of $\CCS\setminus I_z^{-1}(D)$ contains $0$ or $\infty$ (possibly both of them).
Consequently, the latter set is connected if and only if $I_z^{-1}(D)$ does not separate $0$ and $\infty$.
\end{RM}

\begin{OBS}\label{obs_Izm1_K_polynomially_convex_when_Izm1_D_not_separates}
If an open set $\Omega\subset\CC^2$ and a point $z\in(\CC_*)^2$ are such that $I_z^{-1}(\Omega)$ does not separate $0$ and $\infty$, then for every compact polynomially convex set $K\subset\Omega$ the pre-image $I_z^{-1}(K)$ is either empty or polynomially convex.
\end{OBS}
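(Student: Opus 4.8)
The plan is to reduce the claim to an elementary statement about connectedness of a planar complement, and then to combine the maximum principle with polynomial convexity of $K$. Write $L:=I_z^{-1}(K)$. Since $z\in(\CC_*)^2$ the map $I_z$ is proper, so $L$ is a compact subset of $\CC_*$; we may assume $L\neq\emptyset$, for otherwise there is nothing to prove. Recalling that a compact subset of $\CC$ is polynomially convex precisely when its complement in $\CC$ is connected, it suffices to show that $\CC\setminus L$ is connected. The first step I would carry out is to transfer the separation hypothesis from $I_z^{-1}(\Omega)$ to $L$: from $K\subset\Omega$ we get $L\subset I_z^{-1}(\Omega)$, hence $\CCS\setminus I_z^{-1}(\Omega)\subset\CCS\setminus L$, so a connected component of $\CCS\setminus I_z^{-1}(\Omega)$ containing both $0$ and $\infty$ lies in a single component of $\CCS\setminus L$. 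Translating back to $\CC$, this says precisely that $0$ lies in the \emph{unbounded} component of $\CC\setminus L$.

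Next I would argue by contradiction, assuming that $\CC\setminus L$ has a bounded connected component $V$. As $V$ is a component of the open set $\CC\setminus L$, its boundary is contained in $L$, so $0\notin\partial V$; together with $0\notin V$ from the previous step this gives $0\notin\overline V$, i.e.\ $\overline V$ is a compact subset of $\CC_*$. Fixing $\zeta_0\in V$ and an arbitrary polynomial $p$ on $\CC^2$, the function $p\circ I_z$ is holomorphic on all of $\CC_*$, hence on a neighbourhood of $\overline V$, and the maximum principle yields
$$|p(I_z(\zeta_0))|\le\max_{\zeta\in\partial V}|p(I_z(\zeta))|\le\|p\|_K ,$$
the last inequality because $I_z(\partial V)\subset I_z(L)\subset K$. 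Since $p$ is arbitrary and $K$ is polynomially convex, this forces $I_z(\zeta_0)\in K$, i.e.\ $\zeta_0\in L$, contradicting $\zeta_0\in V\subset\CC\setminus L$. Hence $\CC\setminus L$ has no bounded component, so it is connected and $L$ is polynomially convex.

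I expect the only genuinely delicate point to be the first step. Without the hypothesis that $I_z^{-1}(\Omega)$ does not separate $0$ and $\infty$, the set $L$ could wind around the origin; then $0$ would belong to $\overline V$ for the bounded component $V$ it encircles, and the argument via $p\circ I_z$ --- which is undefined at the origin --- would break down (indeed, for such an $L$ the pre-image $I_z^{-1}(K)$ need not be polynomially convex at all, as the example of $L$ a circle of winding number one about $0$ shows). Everything else reduces to routine planar topology together with the defining relation $\HULLP{K}=K$, so I anticipate no further obstacles.
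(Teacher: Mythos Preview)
Your proof is correct and follows essentially the same route as the paper: both show that $\CCS\setminus L$ is connected by combining the separation hypothesis (which places $0$ and $\infty$ in the same component of $\CCS\setminus L$) with the polynomial convexity of $K$ (which forces every component of $\CCS\setminus L$ to meet $\{0,\infty\}$). The paper simply compresses your maximum-principle contradiction into the one-line assertion that $L$, being the preimage of a polynomially convex set under a holomorphic map, is holomorphically convex in $\CC_*$; the underlying content is identical.
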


\begin{proof}
First, note that the set $L:=I_z^{-1}(K)$, if non-empty, has to be compact and holomorphically convex in $\CC_*$, what means that every connected component of $\CCS\setminus L$ contains $0$ or $\infty$.
But since $I_z^{-1}(\Omega)$ does not separate $0$ and $\infty$, these points lie in the same connected component of $\CCS\setminus L$.
Hence, the set $\CCS\setminus L$ is connected, so $L$ is polynomially convex.
\end{proof}

\begin{LEM}\label{lem_K_union_I_z_rho_T_polynomially_convex}
Let $K\subset\CC^2$ be a compact polynomially convex set, $z\in(\CC_*)^2$ and $\varrho\in (0,\infty)$.
If $I_z^{-1}(K)$ is empty or polynomially convex and $$I_z^{-1}(K)\cap\varrho\CDD=\varnothing,$$ then the union $K\cup I_z(\varrho\TT)$ is polynomially convex.
\end{LEM}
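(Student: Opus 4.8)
Here is the approach I would take.

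The plan is to derive polynomial convexity of $K\cup I_z(\varrho\TT)$ from Kallin's lemma, working on the affine algebraic curve that carries the circle $I_z(\varrho\TT)$. Put $q(w)=(w_1/z_1-1)(w_2/z_2-1)$ and $V=q^{-1}(1)$. Since $z\in(\CC_*)^2$, the set $V$ is a smooth affine curve, $V=I_z(\CC_*)$, and $I_z$ is a biholomorphism of $\CC_*$ onto $V$; moreover the polynomials $w_1/z_1-1$ and $w_2/z_2-1$ restrict on $V$ to the parameter $\zeta$ (that is, to $I_z^{-1}$) and to $\zeta^{-1}$, respectively. In particular $q\equiv1$ on $\gamma:=I_z(\varrho\TT)$, so $\gamma\subset V$, $I_z^{-1}(\gamma)=\varrho\TT$, and $I_z^{-1}(K\cap V)=I_z^{-1}(K)$.

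The first step is to prove that $\gamma\cup(K\cap V)$ is polynomially convex. As $q-1$ vanishes on any compact subset of $V$, the polynomial hull of such a set lies again in $V$; it therefore suffices to match polynomial convexity of compact subsets of $V$ with $\OO(\CC_*)$-convexity of their $I_z$-preimages in $\CC_*$. This matching is legitimate because every polynomial restricts on $V$ to a Laurent polynomial in $\zeta$ and, conversely (using that $\zeta$ and $\zeta^{-1}$ extend to polynomials on $\CC^2$ by the above), every Laurent polynomial in $\zeta$ is such a restriction, while Laurent polynomials are dense in $\OO(\CC_*)$. One is thus reduced to checking that $\varrho\TT\cup I_z^{-1}(K)$ is $\OO(\CC_*)$-convex, i.e. that every connected component of $\CCS\setminus(\varrho\TT\cup I_z^{-1}(K))$ contains $0$ or $\infty$. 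Here the hypotheses enter: since $I_z^{-1}(K)$ is polynomially convex in $\CC$ and disjoint from the (contractible) closed disc $\varrho\CDD$, the union $I_z^{-1}(K)\cup\varrho\CDD$ is still polynomially convex in $\CC$ (its complement in $\CCS$ being connected), so $\varrho\TT$ cuts off from $\CCS\setminus I_z^{-1}(K)$ exactly the component $\varrho\DD$ — which contains $0$ — leaving a single further component, which contains $\infty$.

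The second step is to apply Kallin's lemma to the (disjoint) polynomially convex compacta $X_1=\gamma$ and $X_2=K$ with the polynomial $P=q-1$; that $\HULLP{\gamma}=\gamma$ is the case $K=\varnothing$ of the first step. Since $P$ vanishes identically on $\gamma$ one has $P(X_1)=\{0\}$, and $P^{-1}(0)\cap(X_1\cup X_2)=\gamma\cup(K\cap V)$ is polynomially convex by the first step. Granting the separation requirement of Kallin's lemma, this yields that $K\cup\gamma$ is polynomially convex, which is the assertion.

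The delicate point, and the one I expect to be the main obstacle, is the separation hypothesis of Kallin's lemma. With $P=q-1$ it amounts to keeping the origin out of the interior of the polynomial hull of $q(K)-1$, equivalently preventing $q(K)$ from surrounding the value $1$; this is not a consequence of polynomial convexity of $K$ alone, and it is exactly here that the remaining strength of the hypotheses must be used. The value $1$ is attained by $q$ on $K$ only along $K\cap V$, where the parameter satisfies $|\zeta|>\varrho$ \emph{strictly} — which is the precise point at which disjointness of $I_z^{-1}(K)$ from the \emph{closed} disc $\varrho\CDD$, and not merely from the open one, is invoked — while polynomial convexity of $I_z^{-1}(K)$ rules out the one-variable images developing holes around $\varrho\CDD$. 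I would organise the verification around these two facts, choosing the polynomial fed to Kallin's lemma (namely $q-1$, or instead $w_1/z_1-1$, depending on the configuration of $K$ relative to $V$ and to $\varrho\CDD$) so that the required separation becomes transparent; this bookkeeping is the part of the argument demanding the most care.
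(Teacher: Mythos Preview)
Your first step coincides with the paper's: write $M:=\mu^{-1}(0)$ for $\mu(w)=(w_1-z_1)(w_2-z_2)-z_1z_2$ (your $V=q^{-1}(1)$, up to the nonzero scalar $z_1z_2$), observe that $I_z:\CC_*\to M$ is a biholomorphism, and deduce that $(K\cap M)\cup I_z(\varrho\TT)$ is polynomially convex because its $I_z$-preimage $I_z^{-1}(K)\cup\varrho\TT$ is $\OO(\CC_*)$-convex. The paper cites H\"ormander's Theorem~7.4.8 for the passage from $\OO(M)$-convexity to polynomial convexity; your explicit remark that polynomials restrict on $M$ exactly to Laurent polynomials in $\zeta$ gives the same conclusion by hand.

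Where you diverge is in the second step. The paper does \emph{not} invoke Kallin's lemma here; instead it proves and applies a general statement: if $V\subset\CC^N$ is analytic, $K$ is compact polynomially convex, $L\subset V$ is compact, and both $K$ and $(K\cap V)\cup L$ are polynomially convex, then so is $K\cup L$. Its proof uses coherence of the ideal sheaf of $V$ and a bounded extension operator from $\OO(\Omega\cap V)$ to $\OO(\Omega)$ (Bungart), and works for arbitrary analytic $V$, not only hypersurfaces. Your Kallin route is legitimate and, in this specific situation, shorter---but your anxiety about the separation hypothesis is misplaced. In the version cited in the paper (Stout, Theorem~1.6.19) the requirement is that $\HULLP{P(X_1)}\cap\HULLP{P(X_2)}\subset\{0\}$; since $P=q-1$ vanishes identically on $\gamma$, one has $\HULLP{P(\gamma)}=\{0\}$, and the intersection condition holds trivially regardless of whether $q(K)$ ``surrounds'' the value~$1$. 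Thus no further bookkeeping is needed: Step~1 supplies polynomial convexity of $P^{-1}(0)\cap(\gamma\cup K)=\gamma\cup(K\cap V)$, and Kallin's lemma finishes immediately. In short, your argument is correct and more economical for this hypersurface case, while the paper's lemma trades Kallin's separating polynomial for sheaf-theoretic input and gains generality (any analytic $V$) in return.
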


\begin{proof}
Write $z=(z_1,z_2)$ and define
$$\mu(w_1,w_2):=(w_1-z_1)(w_2-z_2)-z_1z_2,\quad(w_1,w_2)\in\CC^2.$$
Clearly, $M:=\mu^{-1}(0)=I_z(\CC_*)$ is a complex submanifold of $\CC^2$ and the map $I_z:\CC_*\to M$ is a biholomorphism.
By the assumptions, the union $I_z^{-1}(K)\cup\varrho\TT$ is holomorphically convex in $\CC_*$.
This implies that the set $(K\cap M)\cup I_z(\varrho\TT)$, being its image by $I_z$, is holomorphically convex in $M$ and thus polynomially convex in $\CC^2$ (use e.g. \cite[Theorem 7.4.8]{hormander_introduction}).
The conclusion now follows directly from the subsequent general lemma.
\end{proof}

\begin{LEM}
Let $V$ be an analytic subset of $\CC^N$ and let $K\subset\CC^N$, $L\subset V$ be compact sets.
If both $K$ and $(K\cap V)\cup L$ are polynomially convex, then so is $K\cup L$.
\end{LEM}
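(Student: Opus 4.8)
The plan is to prove $\widehat{K\cup L}=K\cup L$; only the inclusion $\widehat{K\cup L}\subseteq K\cup L$ needs an argument. I would begin with two preliminary remarks. First, by Cartan's theorems the closed analytic set $V$ is the common zero set of finitely many entire functions $g_1,\ldots,g_r\in\OO(\CC^N)$; in particular, for every point $p\notin V$ there is an entire $g$ with $g(p)\neq0$ and $g|_V=0$. Secondly, a compact set may be tested against entire functions, not merely polynomials, when deciding membership in its polynomial hull (entire functions being uniform limits of polynomials on compacta), and a compact subset $S$ of $V$ which is polynomially convex in $\CC^N$ is also holomorphically convex inside the Stein space $V$ (a standard fact, of the same nature as \cite[Theorem 7.4.8]{hormander_introduction} invoked above). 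The latter will be applied to $S:=(K\cap V)\cup L$.

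\textbf{Step 1: $\widehat{K\cup L}\subseteq K\cup V$.} Let $p\in\widehat{K\cup L}\setminus V$ and pick, by the first remark, an entire $g$ with $g(p)=1$ and $g|_V=0$, so that $g|_L=0$ as well. If $p\notin K$, then polynomial convexity of $K$ furnishes a polynomial $Q$ with $Q(p)=1$ and $\|Q\|_K=:\kappa<1$. For $m$ so large that $\kappa^m\|g\|_K<1$, the entire function $F:=Q^mg$ satisfies $F(p)=1$, $\|F\|_K\le\kappa^m\|g\|_K<1$ and $\|F\|_L=0$, whence $|F(p)|>\|F\|_{K\cup L}$ — impossible for $p\in\widehat{K\cup L}$. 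Thus $p\in K$.

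\textbf{Step 2: $\widehat{K\cup L}\cap V\subseteq S=(K\cap V)\cup L$.} Suppose $q\in\widehat{K\cup L}\cap V$ but $q\notin S$. Since $S$ is holomorphically convex in $V$ and $q\in V\setminus S$, there is $f\in\OO(V)$ with $f(q)=1$ and $\|f\|_S=:\kappa<1$; note $\|f\|_{K\cap V}\le\kappa$ because $K\cap V\subseteq S$. It would suffice to produce an entire function $F$ on $\CC^N$ with $|F(q)|>\|F\|_{K\cup L}$, contradicting $q\in\widehat{K\cup L}$. Extending $f$ to $\widetilde f\in\OO(\CC^N)$ (Cartan's Theorem B) secures $\widetilde f(q)=1$, $\|\widetilde f\|_L=\|f\|_L\le\kappa$ and $\|\widetilde f\|_{K\cap V}\le\kappa$; the remaining problem is to damp the growth of $\widetilde f$ on $K\setminus V$ without altering its values on $V$, i.e.\ to multiply $\widetilde f$ by a suitable power of an entire function that equals $1$ on $V$ and is small on the relevant part of $K$. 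The polynomial convexity of $K$ makes this possible locally: the set $\{z\in K:|\widetilde f(z)|\ge\tfrac{1+\kappa}{2}\}$ is a compact subset of $K=\widehat K$, so $q$ does not lie in its polynomial hull and one can suppress $\widetilde f$ there by a high power of a polynomial; combining this with the (automatic) polynomial convexity of $K\cap V$, a further use of the device of Step 1, and Rossi's local maximum modulus principle should yield the required $F$. I expect this last synthesis — producing a single damping factor that works simultaneously on $K\setminus V$, on $L$, and near $K\cap V$ — to be the main obstacle; the rest is bookkeeping.

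Finally, by Step 1 one has $\widehat{K\cup L}=(\widehat{K\cup L}\cap K)\cup(\widehat{K\cup L}\cap V)$, and Step 2 bounds the second set by $(K\cap V)\cup L$; hence $\widehat{K\cup L}\subseteq K\cup\bigl((K\cap V)\cup L\bigr)=K\cup L$, so $K\cup L$ is polynomially convex.
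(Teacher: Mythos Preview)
Your Step 1 is fine and coincides with the paper's treatment of the case $z_0\notin V$. The gap is in Step 2, and you have essentially flagged it yourself: after extending $f\in\OO(V)$ to $\widetilde f\in\OO(\CC^N)$ you need to damp $\widetilde f$ on $K\setminus V$ without spoiling its values on $V$, and your proposal to ``multiply by a high power of a polynomial'' does not close. A polynomial $P$ with $P(q)=1$ and $\|P\|_B<1$ on the bad set $B=\{z\in K:|\widetilde f(z)|\ge\frac{1+\kappa}{2}\}$ exists because $q\notin K=\widehat K\supset\widehat B$, but nothing controls $\|P\|_L$: if $\|P\|_L>1$ then $\|\widetilde f P^m\|_L$ explodes, and asking simultaneously for $\|P\|_L\le 1$ amounts to $q\notin\widehat{B\cup L}$, which is essentially the statement you are trying to prove. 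The allusions to Rossi's local maximum principle and ``the device of Step 1'' do not supply the missing uniform control either; there is a real obstruction here, not bookkeeping.

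The paper avoids this damping problem altogether. Instead of extending $f$ and then correcting, it starts from a sequence $p_n\in\OO(\CC^N)$ with $p_n(q)=1$ and $\|p_n\|_A\to 0$ for a polynomially convex compact $A$ with $(K\cap V)\cup L\subset\INT A$ and $q\notin A$. One then chooses a pseudoconvex $\Omega\supset K$ with $\Omega\cap V\subset\INT A$, applies a \emph{continuous linear} extension operator (Bungart) from bounded holomorphic functions on $\Omega\cap V$ into $\OO(\Omega)$ to the restrictions $p_n|_{\Omega\cap V}$, obtaining $g_n\to 0$ in $\OO(\Omega)$ with $g_n=p_n$ on $\Omega\cap V$; finally, the coherence of the ideal sheaf of $V$ lets one approximate $g_n-p_n$ uniformly on $K$ by entire functions $q_n$ vanishing on $V$. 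The functions $f_n:=p_n+q_n$ then agree with $p_n$ on $V$ (so $f_n(q)=1$ and $\|f_n\|_L\to 0$) and satisfy $\|f_n-g_n\|_K\to 0$ (so $\|f_n\|_K\to 0$). The crucial ingredient you are missing is this use of a continuous extension operator to transfer smallness from $\Omega\cap V$ to all of $K$; that is what replaces the unavailable ``damping factor''.
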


\noindent
Note that the sets $K$ and $L$ do not have to be disjoint.

\begin{proof}
It is known (see e.g. \cite[Theorems 6.5.2 and 7.1.5]{hormander_introduction}) that the sheaf of germs of holomorphic functions vanishing on $V$ is a coherent analytic sheaf on $\CC^N$.
Therefore, if $X\subset\CC^N$ is a compact, polynomially convex set (intersecting $V$ or not), $U$ is a neighbourhood of $X$, $F\in\OO(U)$ and $F|_{U\cap V}\equiv 0$, then $F$ is a section of this sheaf and it can be approximated uniformly on $X$ by global sections, that is, by elements of $\OO(\CC^N)$ vanishing on $V$.
This essential fact is a consequence of \cite[Theorem 7.2.7]{hormander_introduction}.

Fix a point $z_0\in\CC^N\setminus (K\cup L)$.
We are going to show that $z_0\not\in\HULLP{K\cup L}$.
If $z_0\not\in V$, then \cite[Theorem 7.2.11]{hormander_introduction} provides $f\in\OO(\CC^N)$ vanishing on $V$ and such that $f(z_0)=1$.
On the other hand, there exists $g\in\OO(\CC^N)$ having $g(z_0)=1$ and $\|g\|_K<1$.
Hence, for sufficiently large number $n$ the function $g^n f$ maps $z_0$ to $1$ and $K\cup L$ into $\DD$.

It remains to consider the case when $z_0\in V$.
Since $(K\cap V)\cup L$ is polynomially convex, one can find another compact polynomially convex set $A\subset\CC^N$ such that $$(K\cap V)\cup L\subset\INT A\text{ and }z_0\not\in A.$$
Take a sequence $(p_n)_{n\in\NN}\subset\OO(\CC^N)$ such that $$p_n(z_0)=1\text{ and }\|p_n\|_A\to 0\text{ when }n\to\infty.$$
The set $(\CC^N\setminus V)\cup\INT A$ is an open neighbourhood of $K$, so it has a pseudoconvex open subset $\Omega$ containing $K$.
This means that $\Omega\cap V\subset\INT A$ and hence $p_n\to 0$ on $\Omega\cap V$.
In virtue of \cite[Theorem 13.1]{bungart}, there exists a continuous linear extension operator from the Banach space of bounded holomorphic functions on $\Omega\cap V$ into $\OO(\Omega)$.
Applying it to $p_n$'s we obtain a sequence $(g_n)_{n\in\NN}\subset\OO(\Omega)$ convergent to $0$ in $\OO(\Omega)$ and such that $g_n-p_n=0$ on $\Omega\cap V$.
As it was described in the first paragraph of the proof, for every $n$ one can find a function $q_n\in\OO(\CC^N)$ so that $q_n|_V\equiv 0$ and $\|q_n+p_n-g_n\|_K<\frac 1n$.
Finally, set $f_n:=p_n+q_n$.
Every $f_n$ is an entire function and $$f_n=p_n\text{ on }V\text{ and }\|f_n-g_n\|_K<\frac 1n.$$
This implies that $f_n(z_0)=1$ and $f_n\to 0$ on $K\cup L$ uniformly when $n\to\infty$, so for large $n$ it holds that $|f_n(z_0)|>\|f_n\|_{K\cup L}$, as desired.
\end{proof}

For a holomorphic function $f$ of two variables define 
$$\Lambda(f)(z_1,z_2):=f(z_1,z_2)+z_1\,\frac{\partial f}{\partial z_1}(z_1,z_2).$$
For every domain $\Omega\subset\CC^2$ the mapping $f\mapsto\Lambda(f)$ defines a continuous linear operator on $\OO(\Omega)$.

\begin{LEM}\label{lem_h11_s_f_integral_Lambda_f_Iz}
If $\gamma$ is a loop in $\CC_*$ homotopic to the loop $\zeta\mapsto\zeta$ and $f$ is a polynomial in $\CC^2$, then
$$(h_{(1,1)}*_{\CC^2}f)(z)=\frac{1}{2\pi i}\int_{\gamma}(1+\zeta^{-1})\,\Lambda(f)(I_z(\zeta))d\zeta$$
for all $z\in\CC^2$.
\end{LEM}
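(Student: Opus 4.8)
The plan is to reduce to monomials in $f$ and then compute both sides explicitly. First I would note that, since $f$ is a polynomial, its Hadamard product $h_{(1,1)}*f=\sum_\alpha\frac{(|\alpha|+1)!}{\alpha!}f_\alpha z^\alpha$ is again a polynomial, hence entire; consequently $h_{(1,1)}*_{\CC^2}f$ is just this polynomial, and for fixed $f$ both sides of the asserted identity are polynomials in $z$ defined on all of $\CC^2$ (for the right-hand side: for each $\zeta$ the point $I_z(\zeta)$ is affine-linear in $z$, so $\Lambda(f)(I_z(\zeta))$ is a polynomial in $z$ with coefficients that are Laurent polynomials in $\zeta$, and integrating along $\gamma$ against the fixed form $(1+\zeta^{-1})\,d\zeta$ yields a polynomial in $z$). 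Both sides also depend linearly on $f$. Hence it suffices to treat $f(z)=z_1^a z_2^b$ with $a,b\in\NN$.

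For such an $f$, formula \eqref{eq_h_xi_taylor_series_0} with $\xi=(1,1)$ and $N=2$ gives $\bigl(h_{(1,1)}*_{\CC^2}f\bigr)(z)=\frac{(a+b+1)!}{a!\,b!}z_1^a z_2^b$. For the right-hand side I would compute $\Lambda(f)(z_1,z_2)=(a+1)z_1^a z_2^b$ and substitute $I_z(\zeta)=(z_1(1+\zeta),z_2(1+\zeta^{-1}))$, obtaining
$$\Lambda(f)(I_z(\zeta))=(a+1)\,z_1^a z_2^b\,(1+\zeta)^a(1+\zeta^{-1})^b,$$
so that the right-hand side of the claimed identity equals
$$(a+1)\,z_1^a z_2^b\cdot\frac{1}{2\pi i}\int_\gamma(1+\zeta)^a(1+\zeta^{-1})^{b+1}\,d\zeta.$$
The integrand is a Laurent polynomial in $\zeta$, hence holomorphic on $\CC_*$; since $\gamma$ is homotopic in $\CC_*$ to the loop $\zeta\mapsto\zeta$, which winds once around the origin, homotopy invariance of the integral of a closed holomorphic $1$-form on $\CC_*$ (equivalently, the residue theorem) shows that the integral equals the coefficient of $\zeta^{-1}$ in that Laurent polynomial. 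Writing $(1+\zeta)^a(1+\zeta^{-1})^{b+1}=\zeta^{-(b+1)}(1+\zeta)^{a+b+1}$, this coefficient is the coefficient of $\zeta^b$ in $(1+\zeta)^{a+b+1}$, namely $\binom{a+b+1}{b}=\frac{(a+b+1)!}{b!\,(a+1)!}$. Substituting back, the right-hand side becomes $(a+1)\cdot\frac{(a+b+1)!}{b!\,(a+1)!}z_1^a z_2^b=\frac{(a+b+1)!}{a!\,b!}z_1^a z_2^b$, which matches the left-hand side, completing the proof.

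I do not expect a genuine obstacle here: the statement is essentially a bookkeeping identity. The only two steps worth flagging are the reduction to monomials — where one must observe that $h_{(1,1)}*_{\CC^2}f$ really is the polynomial $h_{(1,1)}*f$ when $f$ is a polynomial, so that the phrase ``for all $z\in\CC^2$'' is meaningful — and the evaluation of the scalar $\zeta$-integral, which is the single place where the homotopy hypothesis on $\gamma$ is used, namely to fix its winding number about $0$ to be $1$ and thereby reduce the integral to a residue at the origin.
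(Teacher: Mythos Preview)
Your proof is correct. It differs from the paper's argument: rather than reducing to monomials and matching binomial coefficients, the paper starts from the coordinate-wise integral formula \eqref{eq_f_s_g_coordinate_wise} for $(h_{(1,1)}*_{\CC^2}f)(z)$, evaluates the inner $\zeta_1$-integral by the Cauchy formula (this is where the operator $\Lambda$ appears), then performs the substitution $\zeta=(\zeta_2-1)^{-1}$ in the remaining $\zeta_2$-integral to arrive at the form involving $I_z$. Your route is more elementary and entirely self-contained---it needs only residues and the identity $(1+\zeta)^a(1+\zeta^{-1})^{b+1}=\zeta^{-(b+1)}(1+\zeta)^{a+b+1}$---whereas the paper's computation treats all polynomials $f$ at once and, more importantly, explains \emph{why} the map $I_z$ and the factor $(1+\zeta^{-1})$ show up: they emerge naturally from the change of variables rather than being verified after the fact.
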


\begin{proof}
Fix a number $\rho\in(0,\frac 12)$ and a polynomial $f$.
From \eqref{eq_f_s_g_coordinate_wise} it follows that
$$(h_{(1,1)}*_{\CC^2} f)(z)=\left(\frac{1}{2\pi i}\right)^2\int_{\rho^{-1}\TT^2}\frac{f(z_1\zeta_1,z_2\zeta_2)\zeta_1\zeta_2}{(\zeta_2-1)^2(\zeta_1-\zeta_2(\zeta_2-1)^{-1})^2} d\zeta_1 d\zeta_2,$$
when $z=(z_1,z_2)\in\CC^2$.
If $\zeta_2\in\rho^{-1}\TT$, then the point $\zeta_2(\zeta_2-1)^{-1}$ lies in $\rho^{-1}\DD$, so, in view of the Cauchy formula, 
\begin{multline*}
\frac{1}{2\pi i}\int_{\rho^{-1}\TT}\frac{f(z_1\zeta_1,z_2\zeta_2)\zeta_1}{(\zeta_1-\zeta_2(\zeta_2-1)^{-1})^2} d\zeta_1\\
=\left.\frac{d}{d\zeta_1} \Bigl(f(z_1\zeta_1,z_2\zeta_2)\zeta_1\Bigr) \right|_{\zeta_1=\zeta_2(\zeta_2-1)^{-1}}
=\Lambda(f)\left(\frac{z_1\zeta_2}{\zeta_2-1},z_2\zeta_2\right).
\end{multline*}
Therefore,
$$(h_{(1,1)}*_{\CC^2} f)(z)=\frac{1}{2\pi i}\int_{\rho^{-1}\TT}\frac{\zeta_2}{(\zeta_2-1)^2}\;\Lambda(f)\left(\frac{z_1\zeta_2}{\zeta_2-1},z_2\zeta_2\right)d\zeta_2.$$
A homotopy argument allows us to integrate over $\rho^{-1}\TT+1$ instead of $\rho^{-1}\TT$.
Then, after changing the variable via $\zeta:=(\zeta_2-1)^{-1}$, we obtain the equality from the conclusion with the integral over $\rho\TT$.
It is not affected by replacing $\rho\TT$ by $\gamma$, because the integrated function of $\zeta$ is holomorphic on $\CC_*$.
\end{proof}

\begin{proof}[Proof of Theorem \ref{th_description_of_h11_s_D}]
Denote by $\Omega$ the set on the right hand side of the conclusion, that is,
$$\Omega:=\CCO\left\lbrace z\in\CC^2:\text{the set }I_z^{-1}(D)\text{ separates }0\text{ and }\infty\right\rbrace.$$
The proof of the equality $h_{(1,1)}*D=\Omega$ is divided into a few steps.

\smallskip
\textsc{Step 1:} We show the inclusion $\Omega\subset h_{(1,1)}*D$.

\smallskip
Fix a function $f\in\OO(D)$ and take a sequence $(f_n)_{n\in\NN}$ of polynomials convergent to $f$ locally uniformly on $D$.
Then the functions $h_{(1,1)}*_{\CC^2} f_n$ tend to $h_{(1,1)}*_D f$ in the same manner on $h_{(1,1)}*D$.
We claim that they form a sequence convergent on $\Omega$ as well.
Fix a point $a\in\Omega$ and take a loop $\gamma:\TT\to I_a^{-1}(D)$ homotopic in $\CC_*$ to the identity loop.
Choose a closed ball $B\subset\Omega$ centered at $a$ so that $I_z(\gamma(\TT))\subset D$ for all $z\in B$.
From Lemma \ref{lem_h11_s_f_integral_Lambda_f_Iz} it follows that
$$(h_{(1,1)}*_{\CC^2} f_n)(z)=\frac{1}{2\pi i}\int_{\gamma}(1+\zeta^{-1})\,\Lambda(f_n)(I_z(\zeta))d\zeta$$
for all $z\in\CC^2$, $n\in\NN$.
Since $\Lambda(f_n)\to\Lambda(f)$ in $\OO(D)$ as $n\to\infty$, the integrals on the right hand side of the above equality converge uniformly with respect to $z\in B$ to the identical integral with $f_n$ repalced by $f$.
Consequently, the sequence of polynomials $h_{(1,1)}*_{\CC^2}f_n$ is uniformly convergent on $B$.
Hence, it does converge locally uniformly on $\Omega$, because $a\in\Omega$ was chosen arbitrarily.
If $g\in\OO(\Omega)$ is the limit, then clearly $g=h_{(1,1)}*f$ near the origin.
This means that $h_{(1,1)}*f\in\OO_{0,\Omega}$ for every $f\in\OO(D)$, so $\Omega\subset h_{(1,1)}*D$.

\smallskip
\textsc{Step 2:} We prove the inclusion $(h_{(1,1)}*D)\cap(\CC_*)^2\subset\Omega$.

\smallskip
Suppose, to the contrary, that it is not valid, and choose a point $z\in (h_{(1,1)}*D)\cap (\CC_*)^2\cap\partial\Omega$.
There exist a constant $C>0$ and a polynomially convex compact set $K\subset D$ such that
\begin{equation}\label{eq_tdh11sD_h11_sD_f_U_leq_C_f_K_1}
|(h_{(1,1)}*_D f)(z)|\leq C\|f\|_K,\quad f\in\OO(D).
\end{equation}
Take a number $\varrho>0$ so that $I_z^{-1}(K)\cap\varrho\CDD=\varnothing$.
Since $z\in\partial\Omega$, the set $I_z^{-1}(D)$ does not separate $0$ and $\infty$, so from Observation \ref{obs_Izm1_K_polynomially_convex_when_Izm1_D_not_separates} and Lemma \ref{lem_K_union_I_z_rho_T_polynomially_convex} it follows that the union $K\cup I_z(\varrho\TT)$ is polynomially convex.
Hence, there exists a compact set $L\subset\CC^2$ such that $I_z(\varrho\TT)\subset\INT L$, $K\cap L=\varnothing$ and $K\cup L$ is polynomially convex (one can justify it making use, for example, of the Kallin Lemma \cite[Theorem 1.6.19]{stout}).
This allows us to employ the Oka-Weil theorem to get a sequence $(f_n)_{n\in\NN}$ of polynomials in $\CC^2$ uniformly convergent to $0$ on $K$ and to $1$ on $L$.
Lemma \ref{lem_h11_s_f_integral_Lambda_f_Iz} implies that
$$(h_{(1,1)}*_D f_n)(z)=\frac{1}{2\pi i}\int_{\varrho\TT}(1+\zeta^{-1})\,\Lambda(f_n)(I_z(\zeta))d\zeta.$$
If $n\to\infty$, then the left hand side goes to $0$, in view of \eqref{eq_tdh11sD_h11_sD_f_U_leq_C_f_K_1}.
On the other hand, the integrals converge to $1$, because $\Lambda(f_n)\to 1$ on $I_z(\varrho\TT)$.
A contradiction.

\smallskip
\textsc{Step 3:} We show that if $(z_1,0)\in h_{(1,1)}*D$, then $(z_1,0)\in D$.
Thanks to evident symmetry, it will also mean that $(0,z_2)\in D$ when $(0,z_2)\in h_{(1,1)}*D$.

\smallskip
Suppose, to the contrary, that $(z_1,0)\not\in D$.
One can find a constant $C>0$ and a compact polynomially convex set $K\subset D$ satisfying
\begin{equation}\label{eq_tdh11sD_h11_sD_f_U_leq_C_f_K_2}
|(h_{(1,1)}*_D f)(z_1,0)|\leq C\|f\|_K,\quad f\in\OO(D).
\end{equation}
Take a closed ball $B$ centered at $(z_1,0)$ so small that $K\cap B=\varnothing$ and $K\cup B$ is polynomially convex.
In view of the Oka-Weil theorem, there exists a sequence $(f_n)_{n\in\NN}$ of polynomials in $\CC^2$ uniformly convergent to $0$ on $K$ and to $1$ on $B$.

From Lemma \ref{lem_h11_s_f_integral_Lambda_f_Iz} is follows that
\begin{equation}\label{eq_tdh11sD_h11_h11_s_fn_equals_fn_zeta_dfn_dz1}
(h_{(1,1)}*_{\CC^2} f_n)(z_1,0) = \Lambda(f_n)(z_1,0) = f_n(z_1,0)+z_1\,\frac{\partial f_n}{\partial z_1}(z_1,0).
\end{equation}
Now, if $n\to\infty$, then \eqref{eq_tdh11sD_h11_sD_f_U_leq_C_f_K_2} implies that the left hand side of \eqref{eq_tdh11sD_h11_h11_s_fn_equals_fn_zeta_dfn_dz1} goes to $0$, while the right hand side converges to $1$.
This contradiction completes the proof of this step.

\smallskip
\textsc{Step 4:} We obtain the inclusion $h_{(1,1)}*D\subset\Omega$.

\smallskip
It remains to prove that if $(z_1,0)\in h_{(1,1)}*D$, then $(z_1,0)\in\Omega$ (the same statement for $(0,z_2)$ can be demonstrated identically).
Take such a point $(z_1,0)$ and choose a curve $\gamma:[0,1]\to h_{(1,1)}*D$ so that $\gamma(0)=(0,0)$, $\gamma(1)=(z_1,0)$ and $\gamma(t)\in(\CC_*)^2$ for $t\in(0,1)$.
The conclusion of Step 2 guarantees that $\gamma(t)\in\Omega$ every $t\in(0,1)$, so $(z_1,0)\in\overline{\Omega}$.
Moreover, from Step 3 we know that $(z_1,0)\in D$, so $\epsilon\DD_*\subset I_{(z_1,0)}^{-1}(D)$ for a small number $\epsilon>0$.
Hence, the latter set separates $0$ and $\infty$, what gives that $(z_1,0)\in\Omega$.
The proof is complete.
\end{proof}


\end{document}